\documentclass[12pt,reqno]{article}

\usepackage[usenames]{color}
\usepackage{amssymb}
\usepackage{amsmath}
\usepackage{amsthm}
\usepackage{amsfonts}
\usepackage{amscd}
\usepackage{graphicx}

\usepackage[colorlinks=true,
linkcolor=webgreen,
filecolor=webbrown,
citecolor=webgreen]{hyperref}

\definecolor{webgreen}{rgb}{0,.5,0}
\definecolor{webbrown}{rgb}{.6,0,0}

\usepackage{color}
\usepackage{fullpage}
\usepackage{float}

\usepackage{graphics}
\usepackage{latexsym}
\usepackage{epsf}
\usepackage{breakurl}

\setlength{\textwidth}{6.5in}
\setlength{\oddsidemargin}{.1in}
\setlength{\evensidemargin}{.1in}
\setlength{\topmargin}{-.1in}
\setlength{\textheight}{8.4in}

\def\modd#1 #2{#1\ \mbox{\rm (mod}\ #2\mbox{\rm )}}

\usepackage[linesnumbered,lined,boxed,commentsnumbered]{algorithm2e}

\begin{document}

\theoremstyle{plain}
\newtheorem{theorem}{Theorem}
\newtheorem{corollary}[theorem]{Corollary}
\newtheorem{lemma}[theorem]{Lemma}
\newtheorem{proposition}[theorem]{Proposition}

\theoremstyle{definition}
\newtheorem{definition}[theorem]{Definition}
\newtheorem{example}[theorem]{Example}
\newtheorem{conjecture}[theorem]{Conjecture}

\theoremstyle{remark}
\newtheorem{remark}[theorem]{Remark}
\begin{center}
\vskip 1cm{\Large\bf 
On Positive Integers $n$ with $\phi(n)=\frac{2}{3} \cdot (n+1)$
}
\vskip 1cm
\large
Christian Hercher\\
Institut f\"{u}r Mathematik\\
Europa-Universit\"{a}t Flensburg\\
Auf dem Campus 1c\\
24943 Flensburg\\
Germany \\
\href{mailto:christian.hercher@uni-flensburg.de}{\tt christian.hercher@uni-flensburg.de} \\
\end{center}

\vskip .2 in
\begin{abstract}
    While solving a special case of a question of Erd\H{o}s and Graham Steinerberger asks for all integers $n$ with $\phi(n)=\frac{2}{3} \cdot (n+1)$. He discovered the solutions $n\in\{5, 5 \cdot 7, 5\cdot 7\cdot 37, 5\cdot 7\cdot 37\cdot 1297\}$ and found that any additional solution must be greater than $10^{10}$. He conjectured that there are no such additional solutions to this problem.

    We analyze this problem and prove:
    \begin{itemize}
        \item Every solution $n$ must be square-free.
        \item IF $p$ and $q$ are prime factors of a solution $n$ then $p\nmid (q-1)$.
        \item Any solution additional to the set given by Steinerberger has to have at least 7 prime factors.
        \item For any additional solution it holds $n\geq 10^{14}$.
    \end{itemize}
\end{abstract}

\section{Introduction}
In \cite{Erdos80} Erd\H{o}s and Graham define for positive integers $x$ the functions $g_0(x):=x+\phi(x)$ and $g_{k+1}(x):=g_0(g_k(x))$. They ask for solutions of $g_{k+r}(x)=2g_k(x)$. This is listed as Problem~\#411 in Bloom's list \cite{Bloom411}. For $r=2$ Steinerberger \cite{Steinerberger} gives six infinite sequences of solutions and postulate that these are all solutions. He proofs that every other solution has to be of the form $x=2^\ell \cdot p$ with a prime $p>10^{10}$ fulfilling $\phi((3p-1)/4)=(p+1)/2$. This criteria can be reformulated as the search for primes $p=8m+7$ such that $\phi(6m+5)=4m+4$, or---to put it in another way---  as the search for positive integers $n=6m+5$ with $\phi(n)=\frac{2}{3}\cdot (n+1)$
and $\frac{1}{3} \cdot (4n+1)$ being a prime. We drop the primality condition and ask for all positive integers $n$ with \begin{align}
\phi(n)&=\frac{2}{3}\cdot (n+1).\label{Grundgleichung}
\end{align}
Steinerberger finds four solutions of \eqref{Grundgleichung}, namely $n\in\{5,35,1195,1679615\}$, and conjectures that this list is complete. He checked and found no other solution under $10^{10}$. We investigate this question further.

\subsection{Outline}
In Section~\ref{sec:square-free} we prove that every ~$n$ solving \eqref{Grundgleichung} has to be square-free (Theorem~\ref{Thm:square-free}) and that for every two primes $p$ and $r$ dividing $n$ we have $p\nmid (r-1)$ (Corollary~\ref{Cor:prdiv}). 

Then, in Section~\ref{sec:SmallSets} we prove directly that Steinerbergers list of known solutions is complete, if one only considers positive integers with at most four prime factors (Theorem~\ref{Thm:k<=4}). We also show that for every fixed number of prime factors there are at most finitely many solutions (Corollary~\ref{Cor:Finiteness}).

And in Section~\ref{sec:Program} we describe how we used the insides gained in the previous sections to write a fast computer program that automates this search for further solutions of \eqref{Grundgleichung}. We state the findings that there are no such solutions with $n$ having exactly five or six prime factors (Theorem~\ref{Thm:k<=4}) and that for every additional solution $n$ has to be at least $10^{14}$ (Theorem~\ref{Thm:nmax}). 

\section{Any solution is square-free} \label{sec:square-free}
\begin{theorem} \label{Thm:square-free}
    Let $n$ be a positive integer with $\phi(n)=\frac{2}{3}\cdot (n+1)$. Then $n$ is square-free.
\end{theorem}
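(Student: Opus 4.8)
The plan is to first pin down a couple of elementary congruence/size constraints on any solution $n$, and then derive a contradiction from the existence of a repeated prime factor by a one-line divisibility argument.

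First I would record the preliminary observations. Rewriting \eqref{Grundgleichung} as $3\phi(n)=2(n+1)$ shows $3\mid 2(n+1)$, hence $n\equiv 2\pmod 3$; in particular $3\nmid n$. Next I would show that $n$ is odd: if $n$ were even, say $2^b\,\|\,n$ with $b\ge 1$, then $\phi(n)=2^{b-1}\phi(n/2^b)\le 2^{b-1}\cdot(n/2^b)=n/2$, whereas $\tfrac{2}{3}(n+1)>n/2$ for every positive integer $n$ (equivalently $4(n+1)>3n$), contradicting \eqref{Grundgleichung}. So every prime factor of $n$ is odd (and $\ne 3$, though only oddness is strictly needed below).

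Now suppose for contradiction that $n$ is not square-free, and let $p$ be a prime with $p^2\mid n$; write $n=p^a m$ with $a\ge 2$ and $p\nmid m$. By the multiplicativity of $\phi$, $\phi(n)=p^{a-1}(p-1)\phi(m)$, and since $a\ge 2$ this gives $p\mid\phi(n)$. Feeding this into $2(n+1)=3\phi(n)$ yields $p\mid 2(n+1)$, and because $p$ is odd we conclude $p\mid (n+1)$. But $p\mid n$ as well, so $p\mid 1$, a contradiction. Hence $n$ is square-free.

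The argument is short, and there is no real obstacle beyond care with the small primes: the single place one could slip is in concluding $p\mid(n+1)$ from $p\mid 2(n+1)$, which fails for $p=2$ — this is exactly why the preliminary step that $n$ is odd is needed before running the main divisibility contradiction. Everything else is an immediate consequence of multiplicativity of $\phi$ together with the defining equation.
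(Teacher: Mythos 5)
Your proof is correct, but it takes a genuinely different route from the paper's. You argue locally: after checking that $n$ is odd (via $\phi(n)\le n/2$ for even $n$), you observe that $p^2\mid n$ forces $p\mid\phi(n)=p^{a-1}(p-1)\phi(m)$, hence $p\mid 2(n+1)$ and, by oddness, $p\mid(n+1)$, contradicting $p\mid n$. This is a clean two-line divisibility argument, and your remark that oddness is exactly what is needed to pass from $p\mid 2(n+1)$ to $p\mid(n+1)$ is the right point of care. The paper instead writes $n=N/D$ with $N=2\prod_{p\mid n}p$ and $D=3\prod_{p\mid n}(p-1)-2\prod_{p\mid n}p$, shows $\gcd(N,D)=2$ and hence $D=2$, which yields $n=\prod_{p\mid n}p$. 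The payoff of the paper's heavier route is that it does not merely prove square-freeness: it establishes the identity $3\prod_{q\in Q}(q-1)-2\prod_{q\in Q}q=2$ (equation~\eqref{NewFormulation}), which is the reformulation driving everything in Sections~\ref{sec:SmallSets} and~\ref{sec:Program}. Your argument proves the theorem as stated more economically, but if you wanted to continue along the paper's development you would still need to derive \eqref{NewFormulation} afterwards (which, to be fair, is immediate once square-freeness is known, since then $\phi(n)=\prod(q-1)$ and the defining equation rearranges directly).
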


\begin{proof}
We want to find all positive integers $n$ which solve equation \eqref{Grundgleichung}. Obviously, $n=1$ is not a solution. Thus, from now on let be $n>1$. As 
\begin{align}
\phi(n)&=n\cdot \prod_{p\mid n} \frac{p-1}{p} \nonumber\\
\intertext{\eqref{Grundgleichung} is equivalent to} 
\prod_{p\mid n} \frac{p-1}{p} &= \frac{2}{3} \cdot \left(1+\frac{1}{n}\right) \label{(2)}
\intertext{or}
n&=\frac{2 \prod_{p\mid n} p}{3 \cdot \prod_{p\mid n} (p-1) - 2\prod_{p\mid n} p}=:\frac{N}{D}. \label{(3)}
\end{align}
First observe that for every $p\mid n$ the left hand side of \eqref{(2)} is $\leq 1-\frac{1}{p}$. Thus, would $n$ be divisible by 2 or 3, the product on the left side of~\eqref{(2)} would be at most $1-\frac{1}{3}=\frac{2}{3}$, hence smaller than the right hand side of~\eqref{(2)}. Thus, we know for all primes $q\mid n$ that $q\geq 5$. 

Any such prime divisor $q$ of $n$ clearly divides the left hand side of~\eqref{(3)} and the numerator~$N$ of the right hand side of~\eqref{(3)}, but $N$ only once. So it does not divide the denominator~$D$. Thus, the only possible non-trivial common divisor of $N$ and $D$ is~2. Since $n$ must be divisible by at least one odd prime $D$ is even, hence $\mathop{gcd}(D, N)=2$. Now $n$ is an integer, which means that $D \mid N$. Hence, $D \mid 2$. Since also $2=\mathop{gcd}(D,N)\mid D$ and $D=\frac{n}{N}>0$ it follows $D=2$. Thus,
\[3 \cdot \prod_{p\mid n} (p-1) - 2\prod_{p\mid n} p=2\]
and $n=\prod_{p\mid n} p$ is square-free.
\end{proof}

Now we can reformulate the problem: Find all finite sets $\emptyset\neq Q\subset \mathbb{P}\setminus\{2,3\}$ of primes with
\begin{align}
3 \cdot \prod_{q\in Q} (q-1) - 2\prod_{q\in Q} q&=2.
\label{NewFormulation}
\intertext{In this setting we have}
\prod_{q\in Q} q&=n.
\end{align}

Remark that with $n=\prod_{q\in Q}$ it follows from \eqref{NewFormulation} that $2\equiv 0-2n\equiv n\pmod{3}$, hence $n\equiv 5\pmod{6}$. So the restriction to such $n$ by Steinerberger does not loose any other solutions of \eqref{Grundgleichung}.

\begin{corollary}\label{Cor:prdiv}
    Let $n$ be a positive integer with $\phi(n)=\frac{2}{3}\cdot (n+1)$ and let $p$ and $r$ be two primes with $pr\mid n$. Then $p\nmid (r-1)$.
\end{corollary}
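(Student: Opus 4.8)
The plan is to derive the statement directly from the reformulated equation~\eqref{NewFormulation} together with the fact, established inside the proof of Theorem~\ref{Thm:square-free}, that every prime divisor of a solution $n$ is at least $5$. Write $Q$ for the set of prime factors of $n$, so that $n = \prod_{q\in Q} q$ and
\[ 3\prod_{q\in Q}(q-1) - 2\prod_{q\in Q} q = 2. \]
The idea is that a prime $p$ dividing $n$ "almost" divides the left-hand side: it divides $\prod_{q\in Q} q$ automatically, and it divides $\prod_{q\in Q}(q-1)$ precisely when $p$ divides one of the factors $q-1$, i.e. when $p \mid (q-1)$ for some $q \in Q$.

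So I would argue by contradiction. Suppose $p, r \in Q$ with $p \mid (r-1)$. Since $r-1$ is one of the factors appearing in $\prod_{q\in Q}(q-1)$, it follows that $p \mid \prod_{q\in Q}(q-1)$, and hence $p \mid 3\prod_{q\in Q}(q-1)$. On the other hand $p \in Q$ gives $p \mid \prod_{q\in Q} q$, hence $p \mid 2\prod_{q\in Q} q$. Subtracting and using~\eqref{NewFormulation} yields $p \mid 2$, so $p = 2$. This contradicts the fact that every prime divisor of $n$ is $\geq 5$. (The degenerate case $p = r$ is handled automatically, since $p \mid (p-1)$ is already impossible for $p \geq 5$.)

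There is essentially no hard step here; the only thing to be careful about is invoking the right facts from Section~\ref{sec:square-free}, namely square-freeness (so that $Q$ really is the set of prime factors and~\eqref{NewFormulation} applies) and the bound $q \geq 5$ for all $q \mid n$. If one wanted a self-contained phrasing, one could instead note that $p \geq 5$ follows because $2, 3 \nmid n$, which is the opening observation of the proof of Theorem~\ref{Thm:square-free}. I expect the whole argument to fit in a few lines.
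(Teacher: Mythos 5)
Your argument is correct and is essentially the paper's own proof: both deduce from equation~\eqref{NewFormulation} that any common prime divisor of $\prod_{q\in Q} q$ and $\prod_{q\in Q}(q-1)$ must divide $2$, and then rule out $2$ using the oddness of $n$ (equivalently, $q\geq 5$ for all $q\in Q$). The paper phrases this as the two products being coprime, while you run the same divisibility computation by contradiction; there is no substantive difference.
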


\begin{proof}
    From equation~\eqref{NewFormulation} we now that there is a set $\{p,r\}\subseteq Q\subset \mathbb{P}\setminus\{2,3\}$ of primes with $n=\prod_{q\in Q} q$. Obviously, $p$ divides $n$ and $r-1$ divides $\prod_{q\in Q} (q-1)$, but from ~\eqref{NewFormulation} we know that both products are relatively prime, because $n$ is odd. Hence, $p\nmid (r-1)$.
\end{proof}

\section{Solutions for small sets with \texorpdfstring{$|Q|\leq 4$}{|Q|<=4}} \label{sec:SmallSets}
From now on let $\mathbb{P}=\{p_1,p_2,\dots\}$ the set of primes with $2=p_1<p_2<\dots$ and $\emptyset \neq Q\subset \mathbb{P}$ a finite subset with $Q=\{q_1,\dots,q_k\}$ and $5\leq q_1 < \dots < q_k$. 

\subsection{The case \texorpdfstring{$k=|Q|=1$}{k=|Q|=1}}
\begin{lemma}\label{Lemma:k=1}
    The only solution of equation~\eqref{NewFormulation} with $k=1$ prime factor is $n=5$.
\end{lemma}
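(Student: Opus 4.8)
The plan is to specialize equation~\eqref{NewFormulation} to the case $|Q|=1$ and observe that it collapses to a single linear equation in one prime variable. Writing $Q=\{q_1\}$ with $q_1\geq 5$, the products over $Q$ each have exactly one factor, so \eqref{NewFormulation} becomes
\[
3\,(q_1-1)-2\,q_1=2.
\]
Expanding the left-hand side gives $3q_1-3-2q_1=q_1-3$, so the equation is equivalent to $q_1-3=2$, i.e.\ $q_1=5$.

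It then remains only to check that this value is admissible: $5$ is indeed a prime and satisfies $q_1\geq 5$, so $Q=\{5\}$ is a genuine solution, and by the computation above it is the only one. Finally, recalling the identification $n=\prod_{q\in Q}q$ from the reformulation, the unique solution with $k=1$ is $n=5$. There is no real obstacle here; the only thing to be careful about is that the argument is an equivalence (not just that $q_1=5$ works), which is immediate since every step above is reversible.
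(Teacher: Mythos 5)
Your proof is correct and follows essentially the same route as the paper: specialize equation~\eqref{NewFormulation} to a single prime, simplify $3(q_1-1)-2q_1=2$ to $q_1=5$, and verify admissibility. The extra remarks about reversibility and primality are fine but not needed beyond what the paper states.
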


\begin{proof}
In the case of $k=1$ equation~\eqref{NewFormulation} reduces to $3(q_1-1)-2q_1=2$, or $q_1=5$. Thus, $n=q_1=5$ is the only solution for \eqref{(2)} with $n$ only having one prime factor.
\end{proof}

\subsection{The case \texorpdfstring{$k=|Q|=2$}{k=|Q|=2}}\label{sec:k=2}
\begin{lemma}\label{Lemma:k=2}
    The only solution of equation~\eqref{NewFormulation} with $k=2$ prime factor is $n=5 \cdot 7$.
\end{lemma}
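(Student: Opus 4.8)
The plan is to reduce equation \eqref{NewFormulation} in the case $k=2$ to a single polynomial Diophantine equation in the two prime factors and then factor it. Writing $Q=\{q_1,q_2\}$ with $5\le q_1<q_2$, equation \eqref{NewFormulation} reads $3(q_1-1)(q_2-1)-2q_1q_2=2$. Expanding $(q_1-1)(q_2-1)=q_1q_2-q_1-q_2+1$ and collecting terms, the $q_1q_2$-contributions partially cancel and one is left with $q_1q_2-3q_1-3q_2+1=0$.

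The key step is to complete the left-hand side to a product (Simon's favourite factoring trick): adding $8$ to both sides gives $q_1q_2-3q_1-3q_2+9=8$, i.e. $(q_1-3)(q_2-3)=8$. Now I would invoke the size constraints on the primes: since $q_1\ge 5$ we have $q_1-3\ge 2$, and since $q_2>q_1\ge 5$ we have $q_2\ge 7$, so $q_2-3\ge 4$; moreover $q_1-3<q_2-3$. The only way to write $8$ as a product of two integers with the smaller one $\ge 2$, the larger one $\ge 4$, and the two strictly increasing is $8=2\cdot 4$. Hence $q_1-3=2$ and $q_2-3=4$, that is $q_1=5$ and $q_2=7$. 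Both are primes $\ge 5$, so $Q=\{5,7\}$ is a genuine solution set, and since $\prod_{q\in Q}q=n$ we obtain $n=5\cdot 7=35$; conversely one checks directly that $n=35$ satisfies \eqref{NewFormulation} (equivalently $\phi(35)=24=\tfrac{2}{3}\cdot 36$).

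There is essentially no obstacle in this argument; the only point requiring care is to enumerate all ordered factorizations of $8$ and to discard the spurious ones using primality and the ordering $q_1<q_2$ — for instance $(q_1-3,q_2-3)=(1,8)$ would force $q_1=4$, which is neither prime nor $\ge 5$, and $(q_1-3,q_2-3)=(4,2)$ is excluded by $q_1<q_2$. Since $k=2$, the square-freeness results of Section~\ref{sec:square-free} are used only through the already-established reformulation \eqref{NewFormulation}, and nothing further is needed.
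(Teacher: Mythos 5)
Your proof is correct and follows essentially the same route as the paper: reduce to $q_1q_2-3q_1-3q_2+1=0$, factor as $(q_1-3)(q_2-3)=8$, and rule out the spurious factorizations. The only cosmetic difference is that you discard $(1,8)$ via the bound $q_1\ge 5$, while the paper uses the oddness of $q_1,q_2$; both are valid.
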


\begin{proof}
In the case of $k=2$ equation~\eqref{NewFormulation} reduces to 
\begin{align*}
    3(q_1-1)(q_2-1)-2q_1q_2&=2.\\
    \intertext{This is equivalent to}
    q_1q_2-3q_1-3q_2+1&=0\\
    (q_1-3)(q_2-3)&=8.
\end{align*}
Since $q_1$ and $q_2$ are both odd, the only factorization valid is $q_1-3=2$ and $q_2-3=4$, thus $q_1=5$, $q_2=7$, and $n=q_1q_2=35$. This is is the only solution for \eqref{(2)} with two prime factors.
\end{proof}

\subsection{A generalized variant of the case \texorpdfstring{$k=2$}{k=2}}\label{sec:k2general}
The method of factoring a certain term to find all solutions of a problem which was used in the previous Section~\ref{sec:k=2} can be generalized:

\begin{lemma}\label{Lemma:k=2factoring}
Let $A>B\geq 2$ be positive integers and $q_1<q_2$ be two primes with 
\begin{align}
A\cdot (q_1-1) \cdot(q_2-1)&=B\cdot q_1 \cdot q_2+2.\label{k=2general}
\intertext{Then}
((A-B)q_1-A) \cdot ((A-B)q_2-A)&=AB+2(A-B).\label{k=2}
\intertext{If $A=2a$ and $B=2b$ are both even it follows}
((a-b)q_1-a)((a-b)q_2-a)&=ab+a-b. \label{k=2reduziert}
\end{align}
\end{lemma}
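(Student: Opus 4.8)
The plan is to prove all three identities by direct algebraic manipulation, starting from \eqref{k=2general}; note that the primality of $q_1,q_2$ plays no role in this lemma and need not be used. First I would clear the product on the left of \eqref{k=2general}: expanding $A(q_1-1)(q_2-1)=Aq_1q_2-Aq_1-Aq_2+A$ turns \eqref{k=2general} into the relation
\[(A-B)\,q_1q_2 = Aq_1 + Aq_2 - A + 2 .\]
This single equation carries all the information in the hypothesis; the rest of the proof is just massaging it into the two factored shapes.

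For \eqref{k=2}, the key step is to multiply the displayed relation by $A-B$ (a positive integer, since $A>B\ge 2$), obtaining
\[(A-B)^2 q_1q_2 = A(A-B)q_1 + A(A-B)q_2 - A(A-B) + 2(A-B).\]
On the other hand, expanding the target product gives
\[\bigl((A-B)q_1 - A\bigr)\bigl((A-B)q_2 - A\bigr) = (A-B)^2 q_1 q_2 - A(A-B)q_1 - A(A-B)q_2 + A^2 .\]
Substituting the previous line for $(A-B)^2 q_1 q_2$ makes the terms $A(A-B)q_1$ and $A(A-B)q_2$ cancel, leaving $A^2 - A(A-B) + 2(A-B) = AB + 2(A-B)$, which is exactly \eqref{k=2}.

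Finally, \eqref{k=2reduziert} is the specialization $A=2a$, $B=2b$: then $A-B = 2(a-b)$, so each factor on the left of \eqref{k=2} equals $2\bigl((a-b)q_i - a\bigr)$, making the left-hand side $4\bigl((a-b)q_1-a\bigr)\bigl((a-b)q_2-a\bigr)$, while the right-hand side is $AB+2(A-B) = 4ab + 4(a-b) = 4(ab+a-b)$; dividing by $4$ yields the claim. I do not anticipate a genuine obstacle here — the only mild point of insight is recognizing that multiplying the rearranged equation by $A-B$ is precisely what produces an honest factorization (rather than trying to complete a product in $q_1,q_2$ directly), and keeping track that the hypotheses $A>B\ge 2$ are exactly what guarantee the multiplier $A-B$ is a positive integer so that the factored identities are meaningful Diophantine constraints.
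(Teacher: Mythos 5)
Your proof is correct and follows essentially the same route as the paper: rearrange \eqref{k=2general}, multiply through by $A-B$ to obtain the factorization \eqref{k=2}, and divide by $4$ in the even case to get \eqref{k=2reduziert}. Your remark that primality of $q_1,q_2$ is not needed is also accurate.
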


\begin{proof}
Let $A, B, q_1, q_2$ be as described in the lemma. We are interested in solutions of
\begin{align*}
    A\cdot (q_1-1) \cdot(q_2-1)&=B\cdot q_1 \cdot q_2+2.
    \intertext{As in the previous section we equivalently restate this equation as}
    (A-B) \cdot q_1q_2 - A \cdot (q_1+q_2) +A -2 &=0 \\
    (A-B)^2 q_1q_2 - A \cdot (A-B)(q_1+q_2) + (A-B)(A-2)&=0\\
    ((A-B)q_1-A) \cdot ((A-B)q_2-A)&=A^2-(A-B)(A-2) \\
    &=AB+2(A-B). 
    \intertext{If $A=2a$ and $B=2b$ this can be simplified further to}
    ((a-b)q_1-a)((a-b)q_2-a)&=ab+a-b.
\end{align*}
In the previous Section~\ref{sec:k=2} we had $A=3$ and $B=2$, in future settings the second variant will become useful. Hence, in the general situation we can find all solutions by factoring $AB+2(A-B)$ or $ab+a-b$, respectively. Since both factors on the left side of \eqref{k=2reduziert} are congruent to $-a \pmod{a-b}$ only factorizations $ab+a-b=f_1 \cdot f_2$ with $f_1\equiv -a \pmod{a-b}$ have to be considered. (Then it is automatically $f_2\equiv -a \pmod{a-b}$, too.) The same is true for equation \eqref{k=2}.
\end{proof}

\subsection{The case \texorpdfstring{$k=|Q|=3$}{k=|Q|=3}}
\begin{lemma}\label{Lemma:k=3}
    The only solution of equation~\eqref{NewFormulation} with $k=2$ prime factor is $n=5 \cdot 7 \cdot 37$.
\end{lemma}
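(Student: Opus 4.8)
The plan is to determine the smallest prime factor $q_1$ first and then to reduce the problem to the two-variable factoring situation of Lemma~\ref{Lemma:k=2factoring}. I claim that necessarily $q_1=5$. Indeed, suppose $q_1\geq 7$. Then $q_2\geq 11$ and $q_3\geq 13$, and since $x\mapsto\frac{x-1}{x}$ is increasing we get $\prod_{q\in Q}\frac{q-1}{q}\geq \frac67\cdot\frac{10}{11}\cdot\frac{12}{13}=\frac{720}{1001}$. On the other hand $n=q_1q_2q_3\geq 7\cdot 11\cdot 13=1001$, so by~\eqref{(2)} this same product equals $\frac23\cdot\left(1+\frac1n\right)\leq\frac23\cdot\frac{1002}{1001}=\frac{668}{1001}<\frac{720}{1001}$, a contradiction. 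Hence $q_1=5$.

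With $q_1=5$ fixed, equation~\eqref{NewFormulation} becomes $12(q_2-1)(q_3-1)=10\,q_2q_3+2$, which is exactly~\eqref{k=2general} applied to the pair $q_2<q_3$ with $A=12$ and $B=10$; here $A>B\geq 2$ and both are even, with $a=6$ and $b=5$. The reduced form~\eqref{k=2reduziert} of Lemma~\ref{Lemma:k=2factoring} then gives $(q_2-6)(q_3-6)=ab+a-b=31$ (and the congruence condition of that lemma is vacuous, since $a-b=1$). Since $31$ is prime and $5=q_1<q_2<q_3$ forces $q_2-6\geq 1$, the only admissible factorization is $q_2-6=1$, $q_3-6=31$, i.e.\ $q_2=7$ and $q_3=37$; both are primes different from $2,3$ and from one another, so $n=5\cdot 7\cdot 37=1295$. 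Conversely one checks directly that this $n$ satisfies~\eqref{NewFormulation}, so it is the unique solution with $k=3$.

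The only real content of the argument is the inequality pinning down $q_1=5$: without it, \eqref{k=2reduziert} would have to be solved for every admissible value of $q_1$, of which there are a priori infinitely many, and the integer to be factored, $\tfrac12(3q_1^2-2q_1-3)$, grows without bound. Once $q_1$ is fixed the remaining work is purely mechanical and in the same spirit as the case $k=2$ treated in Section~\ref{sec:k=2}. (Alternatively, applying the same product estimate one more time shows $q_2=7$, after which~\eqref{NewFormulation} is linear in $q_3$ and yields $q_3=37$ at once; but routing through Lemma~\ref{Lemma:k=2factoring} keeps the bookkeeping uniform with the pattern to be reused for larger $k$.)
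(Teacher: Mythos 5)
Your proof is correct and takes essentially the same route as the paper: first pin down $q_1=5$ by comparing $\prod(1-1/q_i)$ against the right-hand side of \eqref{(2)} under the assumption $q_1\geq 7$, then apply Lemma~\ref{Lemma:k=2factoring} with $A=12$, $B=10$ to obtain $(q_2-6)(q_3-6)=31$ and read off $q_2=7$, $q_3=37$. Your side remark that the integer to be factored is $\tfrac12(3q_1^2-2q_1-3)$ is also a correct computation.
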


\begin{proof}
In the case of $k=3$ equation~\eqref{NewFormulation} reduces to 
\begin{align*}
    3(q_1-1)(q_2-1)(q_3-1)-2q_1q_2q_3&=2.\\
    \intertext{With $n=q_1q_2q_3$ this is equivalent to}
    3 \cdot \prod_{i=1}^3 \left(1-\frac{1}{q_i}\right)&=2+\frac{2}{n}.
\end{align*}
If $q_1>5$ holds, we have $q_1\geq 7$, $q_2\geq 11$, and $q_3\geq 13$. Thus, the left hand-side of this equation is equal to or greater than $ 3\left(1-\frac{1}{7}\right) \cdot \left(1-\frac{1}{11}\right) \cdot \left(1-\frac{1}{13}\right)=\frac{2160}{1001}>2.15$. But on the right hand-side, we have $2+\frac{2}{n} \leq 2+\frac{2}{7\cdot 11 \cdot 13}<2.002$. Hence, there is no solution in this subcase and it must hold $q_1=5$.

If $q_1=5$, equation~\eqref{NewFormulation} reduces further to
\begin{align*}
    12(q_2-1)(q_3-1)-10q_2q_3&=2,
    \intertext{which is a special case of equation~\eqref{k=2general} with $A=12$, $B=10$, hence $a=6$, $b=5$, and $a-b=1$. Thus we can factorize it and get the equivalent equation}
    (q_2-6)\cdot (q_3-6)&=31.
\end{align*}
The only viable factorization is $q_2-6=1$, $q_3-6=31$, thus $q_1=5$, $q_2=7$, $q_3=37$, and $n=5\cdot 7 \cdot 37=1295$ is the only solution of \eqref{(2)} with three prime factors.
\end{proof}

\subsection{A generalized variant of the case \texorpdfstring{$k=3$}{k=3}---For every fixed \texorpdfstring{$|Q|$}{|Q|} there are at most finitely many solutions}
\begin{lemma}\label{Lemma:finitenessAB}
    Let $A>B\geq 2$ be positive integers and let $k\geq 1$ be a fixed integer. Furthermore, let $Q\subseteq \mathbb{P}\setminus\{2,3\}$ a set of primes with $|Q|=k$. Then the equation
    \begin{align}
    A\cdot \prod_{i=1}^k (q_i-1) &=B\cdot \prod_{i=1}^k q_i+2.\label{kprod_general}
    \end{align}
    has only finitely many solutions.
\end{lemma}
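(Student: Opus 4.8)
The plan is to bound the smallest prime $q_1$ of any solution, then induct on $k$. Rewrite \eqref{kprod_general} in the multiplicative form
\[
\frac{A}{B}\cdot \prod_{i=1}^k \left(1-\frac{1}{q_i}\right) = 1 + \frac{2}{B\,\prod_{i=1}^k q_i},
\]
so the left-hand side is strictly greater than $1$. Since each factor $1-\frac1{q_i}$ lies in $[4/5,1)$ and the whole product is at least $\frac{B}{A}$, this forces the first few primes to be small: concretely, if $q_1$ is too large then $\prod_{i=1}^k\left(1-\frac1{q_i}\right)$ is close enough to $1$ that the left-hand side is still $\leq \frac AB$ --- wait, that is automatic; the real constraint is the other direction. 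Let me instead argue: the left-hand side exceeds $1$, so $\prod_{i=1}^k\left(1-\frac1{q_i}\right) > \frac BA$, i.e. $\prod_{i=1}^k\frac{q_i}{q_i-1} < \frac AB$. If $q_1 \geq Q_0$ for a sufficiently large threshold $Q_0 = Q_0(A,B,k)$, then already $\left(\frac{q_1}{q_1-1}\right)$ is so close to $1$ that... no --- a single factor close to $1$ does not help. The correct bound comes from below: we need $\prod_{i=1}^k\left(1-\frac1{q_i}\right) > \frac BA$, and since $1-\frac1{q_i} < 1$ we get no upper bound on $q_1$ from this alone. So the finiteness must be extracted differently.

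Here is the workable approach. Fix the value of $q_1$. Then \eqref{kprod_general} becomes an equation of the same shape in the remaining $k-1$ primes: dividing through, $A(q_1-1)\prod_{i=2}^k(q_i-1) = Bq_1\prod_{i=2}^k q_i + 2$, which is \emph{not} quite of the form \eqref{kprod_general} because of the trailing $+2$ rather than $+2(q_1-1)$ or similar --- so one should keep it general. Actually the clean inductive statement is: \emph{for fixed $A,B,k$ and fixed constant $c$, the equation $A\prod_{i=1}^k(q_i-1) = B\prod_{i=1}^k q_i + c$ has finitely many solutions in primes $q_i \geq 5$.} The base case $k=1$ is the linear equation $(A-B)q_1 = A + c$, which has at most one solution. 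For the inductive step, I would first show there is an absolute bound on the number of possible values of $q_1$: from $A\prod(q_i-1) > B\prod q_i$ (valid once $c > -B\prod q_i$, certainly true for large products, and the finitely many small-product cases are handled by hand) we get $\frac AB > \prod_{i=1}^k \frac{q_i}{q_i-1} > \frac{q_1}{q_1-1}\cdot 1$, giving nothing; the genuine bound is $\frac AB > \left(\frac{q_1}{q_1-1}\right)$ only when $k=1$, but for general $k$ we instead use that $\prod_{i=2}^k\frac{q_i}{q_i-1} > 1$ is too weak.

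The honest fix --- and I expect this to be the heart of the argument --- is to bound $q_1$ using the full product structure: since $q_1 < q_2 < \cdots < q_k$, we have $\frac{q_i}{q_i-1} \geq \frac{q_1 + (i-1)}{q_1 + (i-1) - 1}$ is false in general, but $\frac{q_i}{q_i-1}$ is decreasing in $q_i$ and $q_i \geq q_1 + 2(i-1)$ (consecutive odd primes), so $\prod_{i=1}^k \frac{q_i}{q_i-1} \geq \prod_{j=0}^{k-1}\frac{q_1+2j}{q_1+2j-1}$, and this lower bound, as a function of $q_1$, is $> 1$ and \emph{increasing as $q_1$ decreases}; hence $\frac AB > \prod_{j=0}^{k-1}\frac{q_1+2j}{q_1+2j-1}$ forces $q_1$ below an explicit threshold depending only on $A/B$ and $k$. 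That gives finitely many choices for $q_1$; fix one, absorb $q_1$ into new constants $A' = A(q_1-1)$, $B' = Bq_1$, $c' = c$, and apply the inductive hypothesis for $k-1$. (One must also dispatch separately the finitely many tuples whose product $\prod q_i$ is small enough that $B\prod q_i + c$ could be nonpositive or the inequality $A\prod(q_i-1) > B\prod q_i$ fails; there are boundedly many of those.) The main obstacle is getting the threshold on $q_1$ rigorously --- one needs the elementary but slightly fiddly estimate that $\prod_{j=0}^{k-1}\frac{q_1+2j}{q_1+2j-1} \to 1$ as $q_1\to\infty$ uniformly, together with the observation that it stays strictly above $1$, so the strict inequality $\frac AB >$ that product cuts off a finite initial segment of possible $q_1$. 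Everything after that is bookkeeping and the induction.
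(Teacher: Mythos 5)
Your inductive framework (bound $q_1$, fix it, absorb it into new constants $A'=A(q_1-1)$, $B'=Bq_1$, and recurse down to a base case) is the same skeleton the paper uses. But the step you yourself call the heart of the argument --- bounding $q_1$ from above --- rests on an inequality that points the wrong way. From $A\prod_i(q_i-1)=B\prod_i q_i+2>B\prod_i q_i$ you derive $\tfrac AB>\prod_{i}\tfrac{q_i}{q_i-1}\geq\prod_{j=0}^{k-1}\tfrac{q_1+2j}{q_1+2j-1}=:f(q_1)$. As you note, $f$ is decreasing in $q_1$ with $f(q_1)\to 1^+$; since $A/B>1$ is a fixed constant, the condition $\tfrac AB>f(q_1)$ is therefore satisfied by \emph{all sufficiently large} $q_1$ and can fail only for finitely many \emph{small} values. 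It forces $q_1$ \emph{above} a threshold, not below one, so it gives no upper bound and no finiteness. (You had in fact already observed, correctly, that $\prod_i(1-1/q_i)>B/A$ yields no upper bound on $q_1$ because each factor is less than $1$; the ``honest fix'' is the same inequality in disguise, and the claim that it ``forces $q_1$ below an explicit threshold'' is simply the reverse of what the monotonicity you describe implies.)

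The missing idea is to use the $+2$ on the other side of the comparison rather than discarding it. With $n=\prod_i q_i$ the equation is equivalent to $\prod_{i=1}^k\bigl(1-\tfrac1{q_i}\bigr)=\tfrac BA+\tfrac{2}{A\,n}$. If $q_1>p_m$ (the $m$-th prime), then $q_i\geq p_{m+i}$ for all $i$, so the left-hand side is at least $\prod_{i=1}^k\bigl(1-\tfrac1{p_{m+i}}\bigr)$, which tends to $1$ as $m\to\infty$, while the right-hand side is at most $\tfrac BA+\tfrac{2}{A\prod_{i=1}^k p_{m+i}}$, which tends to $\tfrac BA<1$. Hence for $m$ large enough the equality is impossible, and $q_1\leq p_m$ for an explicit $m=m(A,B,k)$. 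That is the paper's argument, and with it in place your recursion does go through: fix one of the finitely many admissible $q_1$, pass to $A'=A(q_1-1)$ and $B'=Bq_1$ (the case $A'\leq B'$ is vacuous, since then the left-hand side is strictly smaller than the right-hand side), and descend on $k$.
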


\begin{proof}
Let $A>B\geq 2$, $k$, and $Q$ as in the lemma. Further let $Q=\{q_1,\dots,q_k\}$ with $q_1<q_2<\dots<q_k$ primes. With $n=\prod_{i=1}^k q_i$ equation~\eqref{kprod_general} is equivalent to
\begin{align}
    \prod_{i=1}^k \left(1-\frac{1}{q_i}\right) &= \frac{B}{A} + \frac{2}{A\cdot n}\nonumber
    \intertext{and for even $A=2a$ and $B=2b$ to}
    \prod_{i=1}^k \left(1-\frac{1}{q_i}\right) &= \frac{b}{a} + \frac{1}{a\cdot n}. \label{kprodreduced}
\end{align}

Suppose $q_1>p_m$ for a positive integer~$m$. Then $q_i\geq p_{m+i}$ for all $1\leq i\leq k$, hence
\begin{align}
    \prod_{i=1}^k \left(1-\frac{1}{q_i}\right) &\geq \prod_{i=1}^k \left(1-\frac{1}{p_{m+i}}\right) \nonumber
    \intertext{and}
    \frac{b}{a} + \frac{1}{a\cdot n} &\leq \frac{b}{a} + \frac{1}{a\cdot \prod_{i=1}^k p_{m+i}}. \nonumber
    \intertext{Thus, if}
    \prod_{i=1}^k \left(1-\frac{1}{p_{m+i}}\right) &> \frac{b}{a} + \frac{1}{a\cdot \prod_{i=1}^k p_{m+i}} \label{finitenesscriterium}
    \intertext{there is no solution to equation~\eqref{kprodreduced}. And if}
    \prod_{i=1}^k \left(1-\frac{1}{p_{m+i}}\right) &> \frac{B}{A} + \frac{2}{A\cdot \prod_{i=1}^k p_{m+i}} \label{finitenessABcriterium}
\end{align}
there is no solution to equation~\eqref{kprod_general}.

So, only finitely many cases for $q_1$ remain to be considered. For every concrete value $q_1$ can assume one can substitute this value into equation~\eqref{kprod_general} and has reduced the number of variables by one. Now recursively in the same way $q_2$ can be bound, $\dots$, until one reaches a state with only two variables remaining were one can find all at most finitely many solutions via factorization as in Section~\ref{sec:k2general}.
\end{proof}

\begin{corollary}\label{Cor:Finiteness}
    For every fixed positive integer $k$ there are at most finitely many solutions of equation~\eqref{NewFormulation} with $n$ having exactly $k$ prime divisors.
\end{corollary}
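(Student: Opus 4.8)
The plan is to derive this directly from Lemma~\ref{Lemma:finitenessAB}, since equation~\eqref{NewFormulation} is nothing but the special case $A=3$, $B=2$ of the general equation~\eqref{kprod_general}. First I would record that, by Theorem~\ref{Thm:square-free} together with the reformulation immediately following it, any solution $n$ of \eqref{Grundgleichung} with exactly $k$ prime divisors is square-free, hence determined by its set of prime factors $Q=\{q_1,\dots,q_k\}\subset\mathbb{P}\setminus\{2,3\}$ via $n=\prod_{i=1}^k q_i$; conversely, any such $Q$ satisfying \eqref{NewFormulation} produces such an $n$. Thus counting solutions $n$ with $k$ prime divisors is the same as counting sets $Q$ with $|Q|=k$ that solve \eqref{NewFormulation}.

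Next I would verify the hypotheses of Lemma~\ref{Lemma:finitenessAB}: one needs integers $A>B\geq 2$, and indeed $A=3>2=B\geq 2$; the set $Q$ must lie in $\mathbb{P}\setminus\{2,3\}$, which we have just noted; and $k$ is the fixed integer from the statement. Applying the lemma then yields that \eqref{NewFormulation}, being exactly \eqref{kprod_general} for these values of $A$ and $B$, has only finitely many solutions $Q$ with $|Q|=k$, and therefore only finitely many corresponding $n$. That finishes the argument.

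Since essentially all the content is already packaged in Lemma~\ref{Lemma:finitenessAB}, I do not anticipate any genuine obstacle. The only point deserving a sentence of care is the passage from ``$n$ with $k$ prime divisors'' to ``set $Q$ with $|Q|=k$'': this is precisely where square-freeness (Theorem~\ref{Thm:square-free}) is used, and it is what makes the correspondence $n\leftrightarrow Q$ a genuine bijection rather than a many-to-one map, so that finiteness on the $Q$-side transfers to finiteness on the $n$-side.
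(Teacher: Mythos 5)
Your proof is correct and is essentially identical to the paper's, which simply says ``Take $A=3$ and $B=2$ in Lemma~\ref{Lemma:finitenessAB}.'' Your extra remark about the bijection $n\leftrightarrow Q$ via square-freeness is a reasonable bit of added care but not a different argument.
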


\begin{proof}
Take $A=3$ and $B=2$ in Lemma~\ref{Lemma:finitenessAB}. 
\end{proof}

\subsection{The case \texorpdfstring{$k=|Q|=4$}{k=|Q|=4}}\label{sec:k=4}
\begin{lemma}\label{Lemma:k=4}
    The only solution of equation~\eqref{NewFormulation} with $k=4$ prime factor is $n=5 \cdot 7 \cdot 37 \cdot 1297$.
\end{lemma}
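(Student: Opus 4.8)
The plan is to follow the same three--phase strategy used for Lemma~\ref{Lemma:k=3}: first force the smallest prime factor $q_1$, then cut down the possibilities for the second prime $q_2$, and finally reduce to a two--variable equation which Lemma~\ref{Lemma:k=2factoring} solves by factoring. Throughout, write $Q=\{q_1,\dots,q_4\}$ with $5\le q_1<q_2<q_3<q_4$ and $n=q_1q_2q_3q_4$.

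First I would force $q_1=5$. Apply the criterion \eqref{finitenessABcriterium} of Lemma~\ref{Lemma:finitenessAB} with $A=3$, $B=2$, $k=4$ and $m=3$, so the relevant primes are $p_4,\dots,p_7=7,11,13,17$. Since $\tfrac{6}{7}\cdot\tfrac{10}{11}\cdot\tfrac{12}{13}\cdot\tfrac{16}{17}=\tfrac{11520}{17017}$ exceeds $\tfrac{2}{3}+\tfrac{2}{3\cdot 17017}$, equation \eqref{NewFormulation} has no solution with $q_1>p_3=5$; hence $q_1=5$.

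Next I would substitute $q_1=5$ into \eqref{NewFormulation}, turning it into $12(q_2-1)(q_3-1)(q_4-1)=10\,q_2q_3q_4+2$, an instance of \eqref{kprod_general} in the three variables $q_2<q_3<q_4$ with $A=12$, $B=10$. Running \eqref{finitenessABcriterium} once more, the smallest $m$ for which the estimate succeeds is $m=6$ (relevant primes $p_7,p_8,p_9=17,19,23$): because $\tfrac{16}{17}\cdot\tfrac{18}{19}\cdot\tfrac{22}{23}=\tfrac{6336}{7429}$ exceeds $\tfrac{10}{12}+\tfrac{2}{12\cdot 7429}$, there is no solution with $q_2>13$, so $q_2\in\{7,11,13\}$. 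In contrast to the cases $k\le 3$, the criterion does not isolate a single value of $q_2$, so all three possibilities must be treated.

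Then, for each admissible $q_2$, I would substitute its value, obtaining an instance of \eqref{k=2general} in $q_3<q_4$ with even coefficients $A=2a$, $B=2b$; rewriting it through \eqref{k=2reduziert} as $((a-b)q_3-a)((a-b)q_4-a)=ab+a-b$ and factoring (using that both factors are literally $\equiv -a\pmod{a-b}$) disposes of each case:
\begin{itemize}
\item $q_2=13$ gives $a=72$, $b=65$, $a-b=7$ and $(7q_3-72)(7q_4-72)=4687=43\cdot 109$; both factors would have to be $\equiv 5\pmod 7$, yet none of the divisors $1,43,109,4687$ of $4687$ is $\equiv 5\pmod 7$, so there is no solution.
\item $q_2=11$ gives $a=60$, $b=55$, $a-b=5$ and $(5q_3-60)(5q_4-60)=3305=5\cdot 661$; both factors are divisible by $5$, hence their product by $25$, but $25\nmid 3305$, so there is no solution.
\item $q_2=7$ gives $a=36$, $b=35$, $a-b=1$ and $(q_3-36)(q_4-36)=1261=13\cdot 97$; a short size check (two primes exceeding $7$ but less than $36$ cannot give a factor pair whose product is $1261$) rules out negative factors, leaving $q_3-36\in\{1,13\}$, and only $q_3-36=1$, $q_4-36=1261$ yields primes, namely $q_3=37$, $q_4=1297$.
\end{itemize}
Hence $n=5\cdot 7\cdot 37\cdot 1297$ is the unique solution with $k=4$.

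The main obstacle, compared with the cases $k\le 3$, is that Lemma~\ref{Lemma:finitenessAB} no longer pins $q_2$ to a single value: one has to push the criterion out to $m=6$, enumerate $q_2\in\{7,11,13\}$, and in each branch combine the factorisation of $ab+a-b$ with the congruence modulo $a-b$ --- together with, in the surviving branch, a primality test and a small size argument --- to discard the spurious factorisations. Everything else is routine arithmetic.
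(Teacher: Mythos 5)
Your proposal is correct and follows essentially the same route as the paper: the criterion of Lemma~\ref{Lemma:finitenessAB} forces $q_1=5$ and then $q_2\in\{7,11,13\}$ with exactly the same numerical bounds, and each subcase is settled by the factorization of Lemma~\ref{Lemma:k=2factoring} with the same values $4687=43\cdot 109$, $3305$, and $1261=13\cdot 97$. Your handling of the $q_2=11$ branch (divisibility of the product by $25$ versus $25\nmid 3305$) is a harmless variant of the paper's $\gcd(60,55)>1$ observation, and your explicit exclusion of negative factors in the $q_2=7$ branch is slightly more careful than the paper's.
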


\begin{proof}
For $k=4$ equation~\eqref{NewFormulation} reduces to 
\begin{align}
   3\cdot \prod_{i=1}^4 (q_i-1) &=2\cdot \prod_{i=1}^4 q_i+2. \label{(6)}
   \intertext{Suppose $q_1>5=p_3$ holds. Then}
    \prod_{i=1}^4 \left(1-\frac{1}{p_{3+i}}\right) &=\frac{11520}{17017}=\frac{34560}{51051}\nonumber
   \intertext{and}
   \frac{2}{3} + \frac{2}{3\cdot \prod_{i=1}^k p_{3+i}}&=\frac{34036}{51051}.
   \intertext{Thus, the criterion in equation~\eqref{kprod_general} is fulfilled and there is no solution with $q_1\geq 7$. Hence, we can assume $q_1=5$. Then \eqref{(6)} transforms to}
    12\cdot \prod_{i=2}^4 (q_i-1) &=10\cdot \prod_{i=2}^4 q_i+2\nonumber
   \intertext{and}
    6\cdot \prod_{i=2}^4 (q_i-1) &=5\cdot \prod_{i=2}^4 q_i+1. \label{(7)}
    \intertext{Suppose now $q_2>13=p_6$. Then}
    \prod_{i=1}^3 \left(1-\frac{1}{p_{6+i}}\right) &=\frac{6336}{7429}\nonumber
    \intertext{and}
   \frac{5}{6} + \frac{1}{6\cdot \prod_{i=1}^k p_{6+i}}&=\frac{6191}{7429},\nonumber
\end{align}
  so the criterium in equation~\eqref{kprodreduced} is fulfilled and there is no solution with $q_2\geq 17$. Hence, we now can assume $5=q_1<q_2\leq 13$, in other words $q_2\in\{7,11,13\}$.

\subsubsection{The subcase \texorpdfstring{$q_2=13$}{q2=13}:}
If $q_2=13$ equation~\eqref{(7)} reduces further to
\begin{align*}
    72\cdot (q_3-1)(q_4-1) &=65\cdot q_3q_4+1.
    \intertext{As in Section~\ref{sec:k2general} we conclude that we then have (with $a=72$ and $b=65$, hence $a-b=7$)}
    (7q_3-72)(7q_4-72)&=4687=43\cdot 109.
\end{align*}
Since both divisors (1 and 43) of 4687 which are smaller than $\sqrt{4687}$ are not congruent to $-72\equiv 5\pmod{a-b}$ there is no solution in this subcase.

\subsubsection{The subcase \texorpdfstring{$q_2=11$}{q2=11}:}
If $q_2=11$ equation~\eqref{(7)} reduces further to
\begin{align*}
    60\cdot (q_3-1)(q_4-1) &=55\cdot q_3q_4+1.
\end{align*}
But $\mathop{gcd}(60,55)>1$. Hence, there is no solution in this subcase, either.

\subsubsection{The subcase \texorpdfstring{$q_2=7$}{q2=7}:}
If $q_2=7$ equation~\eqref{(7)} reduces further to
\begin{align*}
    36\cdot (q_3-1)(q_4-1) &=35\cdot q_3q_4+1.
    \intertext{As in Section~\ref{sec:k2general} we conclude that we then have (with $a=36$ and $b=35$, hence $a-b=1$)}
    (q_3-36)(q_4-36)&=1261=13\cdot 97.
\end{align*}
This gives to possible values for the smaller factor $(q_3-36)$: 1 and 13. In the second case $q_3$ would be $49=7^2$, hence not a prime which leads to no solution. In the first case both integers $q_3=1+36=37$ and $q_4=1261+36=1297$ are primes. Thus, this leads to the only solution $n=5\cdot 7\cdot 37 \cdot 1297=1679615$ with four prime factors.
\end{proof}
\subsection{Summarizing the cases \texorpdfstring{$k=|Q|\leq 4$}{k=|Q|<=4}}
Now we can combine:
\begin{theorem}\label{Thm:k<=4}
    The only solutions $n$ to equation~\eqref{NewFormulation} with at most four prime factors are
    \[5, 5\cdot 7, 5\cdot 7 \cdot 37, \text{ and } 5 \cdot 7 \cdot 37 \cdot 1297.\]
\end{theorem}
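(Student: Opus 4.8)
The plan is simply to assemble the case analysis already carried out in Lemmas~\ref{Lemma:k=1}--\ref{Lemma:k=4}. First I would recall that by Theorem~\ref{Thm:square-free} every solution $n$ of~\eqref{Grundgleichung} is square-free, and that $n=1$ is not a solution; hence any solution can be written uniquely as $n=\prod_{q\in Q}q$ for a nonempty finite set $Q\subset\mathbb{P}\setminus\{2,3\}$ of primes, and (by the reformulation leading to~\eqref{NewFormulation}) solving~\eqref{Grundgleichung} is equivalent to $Q$ solving~\eqref{NewFormulation}. The hypothesis that $n$ has at most four prime factors then says precisely that $k:=|Q|\in\{1,2,3,4\}$.

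Next I would split into the four cases according to the value of $k$ and quote the corresponding lemma in each: for $k=1$, Lemma~\ref{Lemma:k=1} forces $n=5$; for $k=2$, Lemma~\ref{Lemma:k=2} forces $n=5\cdot 7$; for $k=3$, Lemma~\ref{Lemma:k=3} forces $n=5\cdot 7\cdot 37$; and for $k=4$, Lemma~\ref{Lemma:k=4} forces $n=5\cdot 7\cdot 37\cdot 1297$. Taking the union of these four (singleton) solution sets yields exactly the list in the statement, and conversely one checks directly that each of these four values does satisfy~\eqref{NewFormulation} (equivalently~\eqref{Grundgleichung}), so the list is both complete and correct.

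There is essentially no obstacle remaining at this level: the genuine work — bounding the smallest prime $q_1$ (and, for $k=4$, also $q_2$) via the growth estimates of the type used in Lemma~\ref{Lemma:finitenessAB}, and then resolving the residual two-variable equations by the factorization trick of Lemma~\ref{Lemma:k=2factoring} together with the congruence and $\gcd$ obstructions — has all been done inside the individual lemmas. The only point that warrants a sentence of care is making sure the four cases are exhaustive, i.e. that $k\ge 1$ (because $Q\neq\emptyset$, since $n=1$ is excluded) and $k\le 4$ (the hypothesis), so that no case is omitted.
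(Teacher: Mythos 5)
Your proof is correct and takes essentially the same approach as the paper, which likewise deduces the theorem directly from Lemmas~\ref{Lemma:k=1}, \ref{Lemma:k=2}, \ref{Lemma:k=3}, and \ref{Lemma:k=4}. The extra care you take in justifying that the four cases $k\in\{1,2,3,4\}$ are exhaustive (via Theorem~\ref{Thm:square-free} and the reformulation as equation~\eqref{NewFormulation}) is a reasonable, if implicit, part of the paper's argument.
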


\begin{proof}
    This follows directly from Lemmas~\ref{Lemma:k=1}, \ref{Lemma:k=2}, \ref{Lemma:k=3}, and \ref{Lemma:k=4}.
\end{proof}

\section{Implementation and computer-assisted generated results }\label{sec:Program}

The work flow as demonstrated in Section~\ref{sec:k=4} for solving the case~$k=4$ can be generalized for larger values of~$k$ and automated. Thus, an implementation of these algorithms and the use of computers could lead to better results. With this in mind, we wrote a C++ program which uses the findings of in particular Theorem~\ref{Thm:square-free}, Corollary~\ref{Cor:prdiv}, and Lemmas~\ref{k=2general} and~\ref{Lemma:finitenessAB}. Some optimizations and adjustments for better calculations are being applied, in particular
\begin{itemize}
\item Since we are interested in more than one situation in these calculations, if a given integer is prime, we precalculate the set of the first million primes for quick lookups.
\item For further primalty testing in the first step trial division is implemented which is later supplemented by a Fermat probable prime test. To identify the composite integers~$n<2^{55}\approx 3.6\cdot10^{16}$ passing this test we use a scaled down version of the table of all Fermat pseudoprimes $<2^{64}$ given by Feitsma cite{Feitsma}. Since, we mostly do not need to consider larger integers this is sufficient. And, we excluded the pseudoprimes with small factors from this list, too, as we find them with the initial trial dividing.  
\item As the calculation comes down to factoring integers we utilize trial division and Pollard's $p-1$ factoring method. (As most integers we have to factor are $<10^{15}$ we do not bother to implement more sophisticated methods.
\item Instead of calculating products $\prod_{q\in Q} \left(1-\frac{1}{q}\right)$ directly we work with its logarithm: The factors $\left(1-\frac{1}{q}\right)$ are for larger values of ~$q$ nearly 1, and thus, can't be expressed with high precision. But $\log\left(1-\frac{1}{q}\right)$ tends to 0, hence the representation error when using floating point arithmetic is much lower.
\item To neglect rounding errors while using floating point arithmetic we use safe guards: If we have to guarantee that a value $a$ has to be smaller than a given limit~$L$, than we only use values of $a$ with $a<L \cdot (1-\epsilon)$ with a suitable (small) $\epsilon>0$.
\item As the different subcases during the computation are independent of another, we use OpenMP to parallelize these task on different CPU cores. (But this algorithm is not designed to use the parallelism of a GPU, since the algorithm branches in many ways. Thus, no SIMD (same instruction multiple data) constructs can be used.)
\end{itemize}

With these implementation we get

\begin{theorem} \label{Thm:k<=6}
    The are no solutions $n$ to equation~\eqref{NewFormulation} with exactly five or six prime factors. Thus, every additional solution, not already listed by Steinerberger, has to have at least seven different prime factors.
\end{theorem}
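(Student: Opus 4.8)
The plan is to extend the case analysis of Section~\ref{sec:k=4} to $k=5$ and $k=6$, but now carried out by the computer program rather than by hand. The structure is a bounded depth-first search over the sorted prime tuple $(q_1,\dots,q_k)$. At the root we fix $k$ and use the finiteness criterion \eqref{finitenessABcriterium} (with $A=3$, $B=2$) to bound $q_1$: for each candidate $m$ we test whether $\prod_{i=1}^k\left(1-\tfrac{1}{p_{3+i}}\right)>\tfrac{2}{3}+\tfrac{2}{3\prod_{i=1}^k p_{m+i}}$, and once this holds every larger $q_1$ is excluded, leaving only finitely many values $q_1\in\{5,7,11,\dots\}$ to branch on. Having fixed $q_1$, equation \eqref{NewFormulation} becomes an instance of \eqref{kprod_general} in the remaining $k-1$ variables with updated $A,B$ obtained by absorbing the factor $q_1$ (and, when possible, halving to the reduced form of Lemma~\ref{Lemma:k=2factoring}); if at any stage $\gcd(A,B)>1$ the branch dies immediately, as in the subcase $q_2=11$ of Section~\ref{sec:k=4}. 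We recurse, at each level using \eqref{finitenesscriterium}/\eqref{finitenessABcriterium} to cut off $q_j$ from above, until only two primes remain.

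At the base of the recursion, with two unknown primes $q_{k-1}<q_k$ and parameters $A,B$ (resp.\ $a,b$), we invoke Lemma~\ref{Lemma:k=2factoring}: the pair must satisfy $((a-b)q_{k-1}-a)((a-b)q_k-a)=ab+a-b$, so we factor the right-hand side, enumerate its divisors $f_1\le\sqrt{ab+a-b}$ with $f_1\equiv -a\pmod{a-b}$, solve for $q_{k-1},q_k$, and keep only those pairs for which both values are prime, exceed $q_{k-2}$, and are not already forced by Corollary~\ref{Cor:prdiv} to be excluded (recall the $q_i$ must be pairwise "independent" in the sense that $p\nmid(r-1)$). The program reports that no leaf yields a valid tuple for $k\in\{5,6\}$, which together with Theorem~\ref{Thm:k<=4} gives the statement: any solution beyond Steinerberger's four must have at least seven prime factors.

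The main obstacle is not conceptual but one of rigor and scale. The search tree can be wide — the first few admissible values of each $q_j$ spawn many subcases — and at the leaves one must factor integers that, while typically below $10^{15}$, occasionally are not; this is why the implementation pairs trial division with Pollard's $p-1$ method and a Fermat test backed by Feitsma's pseudoprime table for $n<2^{64}$. The subtler point is correctness of the floating-point comparisons used to apply the finiteness criteria: a branch is pruned only when an inequality is \emph{strictly} verified with a safety margin, i.e.\ we replace each test $a<L$ by $a<L(1-\epsilon)$ and work with $\sum_{q}\log\!\left(1-\tfrac1q\right)$ rather than the product itself to avoid catastrophic cancellation for large $q$. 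Provided these guards never prune a branch that could contain a solution (and never terminate the $q_1$-loop prematurely), the enumeration is exhaustive and the conclusion follows. I would therefore devote the bulk of the write-up to (i) specifying the recursion and its pruning rules precisely, and (ii) arguing that the numerical safeguards are conservative, so that the finite computation is a genuine proof.
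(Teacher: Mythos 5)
Your proposal matches the paper's approach: the paper likewise proves this theorem by generalizing the $k=4$ workflow of Section~\ref{sec:SmallSets} into a recursive, computer-automated search that bounds each $q_j$ via the finiteness criterion of Lemma~\ref{Lemma:finitenessAB}, finishes each branch by the factorization of Lemma~\ref{Lemma:k=2factoring}, and uses the same numerical safeguards (logarithms and $\epsilon$-margins) you describe. The paper's stated proof is simply that this program found no solutions for $k=5$ and $k=6$ while reproducing the known ones for $k\leq 4$, so your write-up is essentially an expanded version of the same argument.
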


\begin{proof}
The program found no solutions with $k=5$ and $k=6$, respectively. (However, it reproduced the known solutions for $k\leq 4$.) The computation proceeded in a few seconds on a standard desktop PC.
\end{proof}

As the number of branches to consider increases by a large amount and a greater number of larger integers has to be factored a search for solutions with exactly $k=7$ prime factors is out of reach in a reasonable computing time. However, if one restricts the search and forces $n$ to be below a given limit, these tasks become feasible, again.

To do so we observe two small findings:

\begin{lemma}\label{Lemma:nmax}
Let $n=\prod_{i=1}^k q_i$ be a solution with $k\geq 3$, $5\leq q_1<\dots<q_k$ primes and $q_1,\dots,q_j$ already known. Furthermore, let $L\geq n$ be an upper limit on $n$. 
\begin{itemize}
\item Then we have 
\[q_{j+1} \leq \sqrt[k-j]{\frac{L}{q_1 \cdot \ldots \cdot q_j}}.\]
\item With $b=\prod_{i=1}^{k-2} q_i$ and $a=\frac{3}{2} \cdot \prod_{i=1}^{k-2} (q_i-1)$ it is
\[ab+a-b< (a-b)^2 \cdot \frac{L}{b}.\]
\end{itemize}
\end{lemma}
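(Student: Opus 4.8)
The first bullet is a direct size estimate. Since the $q_i$ are strictly increasing, we have $q_{j+1}<q_{j+2}<\dots<q_k$, so $q_{j+1}^{\,k-j}\le q_{j+1}q_{j+2}\cdots q_k = \dfrac{n}{q_1\cdots q_j}\le \dfrac{L}{q_1\cdots q_j}$. Taking $(k-j)$-th roots gives the claimed bound. This is essentially the observation that, once the first $j$ primes of a putative solution are fixed, the next prime cannot be too large, else the product of the remaining $k-j$ primes already exceeds $L$; it is what makes the bounded search in the later sections terminate.

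For the second bullet, the plan is to apply Lemma~\ref{Lemma:k=2factoring} (the generalized factoring identity) to the ``last two'' primes $q_{k-1}$ and $q_k$. Starting from equation~\eqref{NewFormulation} for the full set $Q$, I would pull out the factors coming from $q_1,\dots,q_{k-2}$: writing $b=\prod_{i=1}^{k-2}q_i$ and $a=\tfrac32\prod_{i=1}^{k-2}(q_i-1)$, equation~\eqref{NewFormulation} becomes $2a\,(q_{k-1}-1)(q_k-1) = 2b\,q_{k-1}q_k + 2$, i.e.\ exactly equation~\eqref{k=2general} with $A=2a$, $B=2b$ and the two unknown primes $q_{k-1},q_k$. (One should note $a>b$ since $\tfrac{3}{2}\prod(1-\tfrac1{q_i})>1$ for primes $q_i\ge 5$ — indeed $\tfrac32\cdot\tfrac45=\tfrac65>1$ and every further factor exceeds $1$ after multiplying the running product which stays above $1$; more simply, $D=2>0$ in~\eqref{(3)} forces $3\prod(q_i-1)>2\prod q_i$, so $a>b$.) Lemma~\ref{Lemma:k=2factoring} then yields
\[
\bigl((a-b)q_{k-1}-a\bigr)\bigl((a-b)q_k-a\bigr)=ab+a-b.
\]
Now the left-hand side is a product of two positive integers (positive because $(a-b)q-a>a\cdot\frac{q-1}{q}\cdot\frac{q}{?}$ — cleanly: $(a-b)q_{k-1}-a\ge (a-b)q_1-a = 5(a-b)-a = 4a-5b>0$ using $a>b$), the smaller of which is at most $\sqrt{ab+a-b}$, hence each factor is at least $1$; in particular the larger factor $(a-b)q_k-a$ satisfies $(a-b)q_k-a\le ab+a-b$. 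But also $q_k>b\ge q_1\cdots q_{k-2}$ need not hold; instead I bound via $q_k$: from $n=b\,q_{k-1}q_k\le L$ and $q_{k-1}>1$ we only get $q_k\le L/b$ after using $q_{k-1}\ge$ something — more directly, since $(a-b)q_{k-1}-a\ge 1$ the identity gives $(a-b)q_k-a\le ab+a-b$, so $q_k\le \dfrac{ab+2a-b}{a-b}$; combining with $q_k\le L/(b\,q_{k-1})\le L/b$ is not quite the stated inequality, so instead I multiply the two factors: $ab+a-b=\bigl((a-b)q_{k-1}-a\bigr)\bigl((a-b)q_k-a\bigr)<(a-b)^2 q_{k-1}q_k \le (a-b)^2\cdot\frac{n}{b}\le (a-b)^2\cdot\frac{L}{b}$, which is exactly the claim. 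The strict inequality comes from dropping the $-a$ terms (each factor $(a-b)q-a<(a-b)q$).

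The main obstacle is not the algebra but making sure the sign and positivity conditions needed to invoke Lemma~\ref{Lemma:k=2factoring} and to justify $ab+a-b<(a-b)^2q_{k-1}q_k$ genuinely hold: one must confirm $a>b\ge 2$ (so that the hypothesis $A>B\ge2$ of Lemma~\ref{Lemma:k=2factoring} is met with $A=2a$, $B=2b$), and that $k\ge 3$ guarantees $k-2\ge 1$ so that $b$ and $a$ are built from a nonempty product and in particular $b\ge q_1=5\ge 2$. Both follow from Theorem~\ref{Thm:square-free} and the fact established in its proof that every prime divisor of $n$ is $\ge 5$ together with $D=2>0$; I would state these explicitly at the start of the proof. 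Everything else is the substitution into~\eqref{k=2general} and the one-line estimate above.
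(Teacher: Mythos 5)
Your proof of both bullets is essentially the paper's: the first is the same monotonicity observation $q_{j+1}^{k-j}\le q_{j+1}\cdots q_k=n/(q_1\cdots q_j)\le L/(q_1\cdots q_j)$, and the second applies Lemma~\ref{Lemma:k=2factoring} to $q_{k-1},q_k$ with $A=2a$, $B=2b$ and then drops the $-a$ terms to get $ab+a-b<(a-b)^2q_{k-1}q_k=(a-b)^2 n/b\le(a-b)^2L/b$. The only blemish is your parenthetical positivity check ($4a-5b>0$ does not follow from $a>b$ alone, and the ``every further factor exceeds $1$'' remark is backwards), but the paper leaves the positivity of both factors equally implicit, so your argument matches the published one in substance.
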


\begin{proof}
The first observation follows directly from the definition of $(q_i)$ as a monotonically increasing sequence.

For the second one we recall from equation~\eqref{NewFormulation} that 
\begin{align*}
3 \cdot \prod_{i=1}^{k} (q_i-1) &= 2 \cdot \prod_{i=1}^{k} q_i + 2
\intertext{holds. Set $A:=3 \prod_{i=1}^{k-2} q_i$ and $B:=2 \cdot \prod_{i=1}^{k-2} q_i$. Then both products are even and with $a:=\frac{A}{2}$ and $b:=\frac{B}{2}$ we get}
a \cdot (q_{k-1} - 1) \cdot (q_k-1) &= b \cdot q_{k-1}\cdot q_k + 1.
\intertext{From this we get as in Section~\ref{sec:k2general}}
((a-b)q_{k-1}-a) \cdot ((a-b)q_{k}-a)&=ab+a-b.
\intertext{As $a-b\geq 1>0$ and $a>0$ it is}
ab+a-b&<(a-b)^2 q_{k-1}q_k=(a-b)^2 \cdot \frac{n}{b}\leq (a-b)^2 \cdot \frac{L}{b}.
\end{align*}
\end{proof}

This can be used to cut branches in the computation:

\begin{itemize}
\item If at any point in the computation we get $q_{j+1}\leq q_j$ from Lemma~\ref{Lemma:nmax} then there is no solution in this case and it can be terminated.
\item If one has to factor an integer $ab+a-b$ in the algorithm which does not fulfill the second observation of  Lemma~\ref{Lemma:nmax} then this factorization cannot lead to a solution with $n\leq L$. Hence, we do not complete this factorization and cut this branch.
\end{itemize}

In this way we can exclude the existence of solutions $n\leq L$ with exactly $k$ prime divisors even for larger values of~$k$. But since $L \geq n\geq 5 \cdot 7 \cdot \ldots \cdot p_{k+2}$ for given $L$ the number $k$ cannot be arbitrarily large and is, in fact, limited by 12 in all our discussed calculations. With this we get in about two days of computing time on our desktop PC:

\begin{theorem}\label{Thm:nmax}
There is no additional solution with $n\leq 10^{14}$.
\end{theorem}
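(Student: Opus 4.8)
The plan is to combine the finiteness machinery of Lemma~\ref{Lemma:finitenessAB} with the two pruning observations of Lemma~\ref{Lemma:nmax} into a single finite, verifiable computation with $L=10^{14}$. First I would note that by Theorem~\ref{Thm:square-free} and Theorem~\ref{Thm:k<=6} any additional solution $n\le 10^{14}$ is square-free with at least seven prime factors, and by the remark after \eqref{NewFormulation} satisfies $n\equiv 5\pmod 6$ with all prime factors $\ge 5$. Since $n\ge q_1\cdots q_k \ge 5\cdot 7\cdot 11\cdots p_{k+2}$ and $5\cdot 7\cdot 11\cdot 13\cdot 17\cdot 19\cdot 23\cdot 29\cdot 31\cdot 37\cdot 41\cdot 43\cdot 47 > 10^{14}$ while the product of the first twelve admissible primes $5,\dots,43$ is below $10^{14}$, the number of prime factors $k$ ranges only over $7\le k\le 12$; this is the first step, pinning down finitely many values of $k$.

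Next, for each such $k$ I would run the recursive search already described: fix $q_1$, then $q_2$, and so on. At each stage, having chosen $q_1,\dots,q_j$, the bound $q_{j+1}\le \bigl(L/(q_1\cdots q_j)\bigr)^{1/(k-j)}$ from the first part of Lemma~\ref{Lemma:nmax} caps the candidates for $q_{j+1}$; combined with the finiteness criterion \eqref{finitenessABcriterium} — which rules out $q_1$ (hence all later $q_i$) that are too large for the residual equation to balance — only finitely many branches survive, and a branch dies immediately if the derived bound forces $q_{j+1}\le q_j$. When the recursion reaches two undetermined primes $q_{k-1}<q_k$, the residual equation is an instance of \eqref{k=2reduziert}, so solutions correspond to factorizations $ab+a-b=f_1f_2$ with $f_1\equiv -a\pmod{a-b}$; the second part of Lemma~\ref{Lemma:nmax} says that if $ab+a-b\ge (a-b)^2 L/b$ no such factorization can give $n\le L$, so that branch is cut before factoring. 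Otherwise one factors $ab+a-b$ (feasible since it is comfortably below $10^{15}$ in all surviving branches), checks each admissible factorization for primality of the resulting $q_{k-1},q_k$, squarefreeness, and the size constraint $n\le 10^{14}$.

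I would then assemble these into the theorem: the program enumerates, for every $k\in\{7,\dots,12\}$, all branches not killed by the two pruning rules, completes the bounded factorizations, and reports that none yields a square-free solution of \eqref{NewFormulation} with $n\le 10^{14}$; together with Theorem~\ref{Thm:k<=4} and Theorem~\ref{Thm:k<=6} this shows the only solutions up to $10^{14}$ are Steinerberger's four, hence any additional solution exceeds $10^{14}$. The main obstacle is not mathematical but computational: guaranteeing that the branching terminates quickly enough and that floating-point evaluation of the products $\prod(1-1/q_i)$ (done via logarithms, with the $\epsilon$-margin safeguard described above) never wrongly prunes a live branch; correctness of the final claim rests on the soundness of those safeguards and on the completeness of the pseudoprime table used for primality testing in the relevant range. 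Given that the case $k\le 6$ already ran in seconds and the bounded search is reported to finish in about two days, the computation is within reach, and the remaining work is a careful audit of the pruning inequalities and the numeric tolerances.
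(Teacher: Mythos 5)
Your proposal follows essentially the same route as the paper: bound $k$ by comparing $L=10^{14}$ with the product of the smallest admissible primes, recurse over $q_1,\dots,q_{k-2}$ using the cap $q_{j+1}\leq\sqrt[k-j]{L/(q_1\cdots q_j)}$ and the criterion \eqref{finitenessABcriterium}, finish each branch by the factorization of $ab+a-b$ from Lemma~\ref{Lemma:k=2factoring} with the second pruning rule of Lemma~\ref{Lemma:nmax}, and let the verified computation report that no solution with $n\leq 10^{14}$ beyond Steinerberger's list exists. One minor numerical slip: $5\cdot 7\cdots 43\approx 2.2\cdot 10^{15}>10^{14}$, so for $L=10^{14}$ the range is in fact $7\leq k\leq 11$ rather than $7\leq k\leq 12$; since you only searched a superset of the necessary cases, this does not affect the argument.
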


\bigskip
\hrule
\bigskip

\noindent 2020 {\it Mathematics Subject Classification}:
Primary 11A25; Secondary 1104.

\noindent \emph{Keywords: } Special value of phi function. 

\bigskip
\hrule
\bigskip

\end{document}